\newtheorem{thm}{Theorem}[section]
\newtheorem{lem}[thm]{Lemma}
\newtheorem{defn}[thm]{Definition}
\newcommand{\thmref}[1]{Theorem~\ref{#1}}
\newcommand{\lemref}[1]{Lemma~\ref{#1}}
\theoremstyle{remark}
\newtheorem{rmk}{Remark}[section]
\newenvironment{acknowledgements}{\bigskip\textbf{Acknowledgements.}}{}
\renewcommand{\geq}{\geqslant}
\newcommand{\medmatrix}[4]{\Bigl(\begin{matrix} #1 \!& \!#2 \\[-4pt] #3 \!&\! #4 \end{matrix}\Bigr)}
\title[Construction of cusp forms using Rankin-Cohen Brackets]{ Construction of cusp forms using Rankin-Cohen Brackets}
\author{Abhash Kumar Jha and Arvind Kumar}
\address{School of Mathematical Sciences, National Institute of Science Education and Research, Bhubaneswar 751005, India}
\email{abhash.jha@niser.ac.in}
\address {Harish-Chandra Research Institute, Chhatnag Road, Jhunsi,Allahabad -211 019, India.}
\email{kumararvind@hri.res.in}
\subjclass[2010]{Primary: 11F37; Secondary: 11F25, 11F66}
\keywords{Modular Forms, Rankin-Cohen Brackets, Adjoint map, Dirichlet-Series}
\begin{document}
\begin{abstract} 
For a fix modular form $g$ and a non negative ineteger $\nu$, by using Rankin-Cohen bracket we first
define a linear map $ T_{g, \nu}$ on the space of modular forms.
We explicitly compute the adjoint of this map and show that
the $n$-th Fourier coefficients of the 
image of the cusp form $f$ under this map is, upto a constant a special value of  Rankin-Selberg convolution of $f$ 
and $g.$ This is a generalization of the work due to W. Kohnen (Math. Z., 207,  (1991), 657-660) and  
S. D. Herrero (Ramanujan J., 36(2014), no.3, 529-536) in the case of integral weight modular forms to
half integral weight modular forms. As a consequence we get non-vanishing of special value of certain 
Rankin- Selberg convolution of modular forms.
\end{abstract}

\date{\today}

\maketitle

\section{Introduction}
W. Kohnen \cite{kohnen} constructed cusp forms whose Fourier coefficients are given by special 
values of certain Dirichlet series by computing the adjoint of the product map by a fixed 
cusp form with respect to  the usual Petersson scalar product. This result has been generalized 
by several authors to other automorphic forms (see the list \cite{ckk, lee, lee2,sakata, wang}).
The work of Kohnen has been generalized by S. D. Herrero \cite{herrero}, where the 
author constructed the cusp forms by computing the adjoint of the map constructed using 
the  Rankin-Cohen brackets by a fixed cusp form instead of the product map. Recently, the work 
of S. D. Herrero \cite{herrero} has been generalised by first author and B. Sahu to the case of Jacobi
forms \cite{a-s} which also generalises the result of H. Sakata  \cite{sakata}. In this article
we extend the work of S. D. Herrero to the case of  half integral weight modular forms. We 
apply this result to get non-vanishing of special value of certain Rankin- Selberg convolution 
of modular forms.

\section{Preliminaries}
\subsection{Elliptic Modular Forms} 
Let $\mathcal H$ be the complex upper half-plane and $\Gamma $ be a congruence subgroup of 
the full modular group $SL_2(\mathbb{Z}).$ For $k\in\mathbb{Z}$ and
$\gamma = \medmatrix a b c d ,$ define the slash operator as follows;\\
  $$f\mid_k \gamma (z):= (cz+d)^{-k}f(\gamma z),\;{\rm{where}}\; \gamma z = \dfrac{az+b}{cz+d}.$$

Let $M_k(\Gamma, \chi)$ (respectively $S_k(\Gamma, \chi)$) denote the space of modular forms (resp. cusp forms) of integral weight $k$ and 
character $\chi$ for $\Gamma,$ i.e., for every 
$\gamma = \medmatrix a b c d \in \Gamma,\;f\mid_k \gamma (z)\;=\;\chi (d)f(z),$ 
and holomorphic at cusps of $\Gamma.$\\

We define the Petersson scalar product on $S_k(\Gamma, \chi)$ as follows;
$$
\langle f, g \rangle\;=\;\int_{\Gamma\setminus \mathbb{H}} f(z)\overline{g(z)}  (Im(z))^k d^*z,
$$
where $ z=x+iy $ and $d^*z=\dfrac{dxdy}{y^2}$ is an invariant measure under the 
action on $\Gamma$ on $\mathcal H .$ For more details on the theory of modular 
forms, we refer to \cite{kob}.

\subsection{Poinca{\'r}e series}
\begin{defn}
Let $ n$ be a be a positive integer. The $n$-th Poinca{\'r}e series of integer weight $k$ is defined by
\begin{equation}\label{poincare}
P_{k, n} (z):=\sum_{\gamma \in \Gamma{\infty}\setminus \Gamma} e^{2 \pi i nz}|_k \gamma ,
\end{equation}
where $\Gamma_{\infty}:=\left\{ \pm \medmatrix 1 t 0 1 | t  \in \mathbb Z  \right\}.$  It is well known that $P_{k, n} \in S_k(\Gamma)$ for $k >2$ .
\end{defn} 
\noindent This series has the following property. 
\begin{lem}\label{poincare-lemma}
Let $ f \in S_k(\Gamma)$ with Fourier expansion $f(z)=\sum_{m=1}^\infty a(m) q^m .$ Then 
\begin{equation}
\langle f,~  P_{k, n} \rangle = \alpha_{k, n} a(n), \;\;\;\;\;\;\; where \;\; \alpha_{k, n}=\frac{\Gamma(k-1)} {(4\pi n)^{k-1}}.
\end{equation}

\end{lem}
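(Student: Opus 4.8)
The plan is to use the classical unfolding argument. Writing $z = x+iy$ and recalling that $(e^{2\pi i n z})|_k\gamma(z) = (cz+d)^{-k}\, e^{2\pi i n \gamma z}$ for $\gamma = \medmatrix a b c d$, I would substitute the definition \eqref{poincare} of $P_{k,n}$ into the Petersson inner product to get
\begin{equation*}
\langle f, P_{k,n}\rangle = \int_{\Gamma\setminus\mathcal{H}} f(z)\, \overline{\sum_{\gamma\in\Gamma_\infty\setminus\Gamma}(e^{2\pi i n z})|_k\gamma}\;(\mathrm{Im}\,z)^k\, d^*z .
\end{equation*}
The form $f(z)\overline{e^{2\pi i n z}}(\mathrm{Im}\,z)^k$, being assembled from two weight-$k$ objects, transforms under $\Gamma$ in such a way that, after invoking the modularity $f|_k\gamma = f$, the sum over the cosets $\Gamma_\infty\setminus\Gamma$ can be folded together with the integral over the fundamental domain $\Gamma\setminus\mathcal{H}$ into a single integral over $\Gamma_\infty\setminus\mathcal{H}$. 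This yields
\begin{equation*}
\langle f, P_{k,n}\rangle = \int_{\Gamma_\infty\setminus\mathcal{H}} f(z)\, e^{-2\pi i n \overline{z}}\,(\mathrm{Im}\,z)^k\, d^*z .
\end{equation*}

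Next I would take the vertical strip $\{x+iy : 0\le x\le 1,\ y>0\}$ as a fundamental domain for $\Gamma_\infty\setminus\mathcal{H}$, insert the Fourier expansion $f(z)=\sum_{m\ge 1}a(m)e^{2\pi i m z}$, and separate the variables using $d^*z = dx\,dy/y^2$. The $x$-integral $\int_0^1 e^{2\pi i(m-n)x}\,dx$ vanishes unless $m=n$ by orthogonality of the additive characters, so only the $m=n$ term survives, leaving
\begin{equation*}
\langle f, P_{k,n}\rangle = a(n)\int_0^\infty e^{-4\pi n y}\, y^{k-2}\, dy .
\end{equation*}
The remaining integral is a Gamma integral, $\int_0^\infty e^{-4\pi n y}y^{k-2}\,dy = \Gamma(k-1)/(4\pi n)^{k-1}$, which is exactly the constant $\alpha_{k,n}$ claimed in the statement.

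The main obstacle is justifying the interchange of summation and integration in the unfolding step. This rests on the absolute and locally uniform convergence of the Poincaré series, guaranteed by the hypothesis $k>2$ built into the definition, together with the exponential decay of the cusp form $f$ toward the cusps, which makes $f(z)(\mathrm{Im}\,z)^{k/2}$ bounded on $\mathcal{H}$. These two facts supply an integrable dominating function, so Fubini's theorem applies and the rearrangement is legitimate. Once this analytic point is settled, the remainder of the argument is the elementary character orthogonality and Gamma-function evaluation displayed above.
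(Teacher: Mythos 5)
Your proof is correct: the unfolding of the Poincar\'e series, the orthogonality of $\int_0^1 e^{2\pi i(m-n)x}\,dx$, and the Gamma integral $\int_0^\infty e^{-4\pi ny}y^{k-2}\,dy=\Gamma(k-1)/(4\pi n)^{k-1}$ give exactly the stated constant $\alpha_{k,n}$, and your appeal to the convergence of the series for $k>2$ together with the decay of the cusp form legitimizes the interchange of sum and integral. The paper itself states this lemma without proof, as a classical fact (Petersson's formula), but the argument you give is the standard one and is precisely the same unfolding technique the paper deploys later in its proof of Theorem~\ref{main2}, so there is nothing to reconcile between the two.
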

\subsection{Modular Forms of Half Integral Weight}
Let $\Gamma =\Gamma_0(4).$ For $k\in\mathbb{Z}$ and $\gamma = \medmatrix a b c d ,$ 
define the slash operator as follows;\\
$$f\tilde{\mid}_{k+\frac{1}{2}} \gamma (z):= \left(\dfrac{c}{d}\right)\left(\dfrac{-4}{d}\right)^{k+\frac{1}{2}} (cz+d)^{-k-\frac{1}{2}}f(\gamma z),$$ where $\left(\dfrac{c}{d}\right)$ 
is the Kronecker symbol.\\
Let $M_{k+\frac{1}{2}}(\Gamma, \chi)$ (resp. $S_{k+\frac{1}{2}}(\Gamma, \chi)$) denote the space of modular forms (resp. cusp forms) of weight $k+\frac{1}{2}$ and 
character $\chi$ for $\Gamma,$ i.e., for every $\gamma = \medmatrix a b c d \in \Gamma,\;f\tilde{\mid}_{k+\frac{1}{2}} \gamma (z)\;=\;\chi(d)f(z),$ 
and holomorphic (resp. vanish) at cusps of $\Gamma.$ \\

We define the Petersson scalar product on $S_{k+\frac{1}{2}}(\Gamma, \chi)$ as follows;
$$
\langle f, g \rangle\;=\;\int_{\Gamma\setminus \mathbb{H}} f(z)\overline{g(z)}  (Im(z))^{k+\frac{1}{2}} d^*z,
$$
where $z=x+iy.$
 The spaces $S_k(\Gamma,\chi)$ and $S_{k+\frac{1}{2}}(\Gamma,\chi)$ are finite dimensional Hilbert spaces.\\
 For more details on the theory of modular forms of half integral weight, we refer to \cite{kob} and \cite{shi}.\\
\subsection{Poinca{\'r}e series of half integral weight}
\begin{defn}
Let $ n$ be a be a positive integer. The $n$-th Poinca{\'r}e series of weight $k+\frac{1}{2},$ where $k\in \mathbb{Z}$ is defined by
\begin{equation}\label{poincare half}
P_{k+\frac{1}{2}, n} (z):=\sum_{\gamma \in \Gamma{\infty}\setminus \Gamma} e^{2 \pi i nz}\tilde{\mid}_{k+\frac{1}{2}} \gamma.
\end{equation}
It is well known that $P_{k+\frac{1}{2}, n} \in S_{k+\frac{1}{2}}(\Gamma)$ for $k >2$ .
\end{defn} 
\noindent This series has the following property. 

\begin{lem}\label{poincare-lemma half}
Let $ f \in S_{k+\frac{1}{2}}(\Gamma)$ with Fourier expansion 
$f(z)=\sum_{m=1}^\infty a(m) q^m .$ Then 
\begin{equation}
\langle f,~  P_{k+\frac{1}{2}, n} \rangle= \tilde{\alpha}_{k, n} a(n), \;\;\;\;\;\; 
where \;\; \tilde{\alpha}_{k, n}=\frac{\Gamma(k-\frac{1}{2})} {(4\pi n)^{k-\frac{1}{2}}}.
\end{equation}

\end{lem}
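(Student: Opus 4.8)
The plan is to prove the half-integral weight analogue of the Petersson coefficient formula, Lemma~\ref{poincare-lemma half}, by the standard unfolding argument, being careful to track the extra automorphy factors coming from the theta multiplier in the slash operator $\tilde{\mid}_{k+\frac{1}{2}}$. Write $P := P_{k+\frac{1}{2},n}$ and expand the scalar product $\langle f, P\rangle$ against the definition \eqref{poincare half} as a sum over $\gamma \in \Gamma_\infty\backslash\Gamma$ of integrals of $f(z)\,\overline{(e^{2\pi i n z}\tilde{\mid}_{k+\frac12}\gamma)}\,(\operatorname{Im}z)^{k+\frac12}$ over a fundamental domain $\Gamma\backslash\mathcal H$.

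First I would use the fact that $f$ is itself a modular form of weight $k+\frac12$ for $\Gamma$ to absorb each slash operator into a change of variables. The key computation is that, because $(\operatorname{Im}\gamma z)^{k+\frac12}$ transforms by exactly the reciprocal of the modulus of the automorphy factor $\bigl(\frac{c}{d}\bigr)(cz+d)^{k+\frac12}$ (the theta-multiplier and the Kronecker symbol have absolute value $1$, so they cancel upon taking $|{\cdot}|^2$), the integrand $f(z)\,\overline{e^{2\pi i n\gamma z}}\,(\operatorname{Im}z)^{k+\frac12}$ is left invariant under the substitution $z\mapsto \gamma z$. This is the usual miracle behind the unfolding method, and here it hinges precisely on checking that the half-integral automorphy factors have unit modulus. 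Summing over the cosets $\Gamma_\infty\backslash\Gamma$ then collapses the integral over $\Gamma\backslash\mathcal H$ into a single integral over the strip $\Gamma_\infty\backslash\mathcal H = \{0\le x\le 1,\ y>0\}$ of $f(z)\,\overline{e^{2\pi i n z}}\,y^{k+\frac12}\,d^*z$.

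Next I would insert the Fourier expansion $f(z)=\sum_{m\ge1}a(m)q^m$ and carry out the $x$-integration over $[0,1]$, which by orthogonality of the characters $e^{2\pi i(m-n)x}$ kills every term except $m=n$, leaving $a(n)\int_0^\infty e^{-4\pi n y}\,y^{k+\frac12}\,\frac{dy}{y^2}$. Recognizing this as a Gamma integral, $\int_0^\infty e^{-4\pi n y}\,y^{k-\frac12}\,\frac{dy}{y} = \dfrac{\Gamma(k-\frac12)}{(4\pi n)^{k-\frac12}}$, yields $\langle f,P\rangle = \tilde\alpha_{k,n}\,a(n)$ with $\tilde\alpha_{k,n}=\dfrac{\Gamma(k-\frac12)}{(4\pi n)^{k-\frac12}}$, as claimed; this is the exact half-integral shadow of Lemma~\ref{poincare-lemma}, with $k-1$ replaced by $k-\frac12$.

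The main obstacle, and the only place where the half-integral setting genuinely differs from the classical one, is the justification that the slash operator $\tilde{\mid}_{k+\frac12}$ interacts correctly with both the modular transformation law of $f$ and the invariant measure. I expect to have to verify carefully that the Kronecker symbol $\bigl(\frac{c}{d}\bigr)$ together with the factor $\bigl(\frac{-4}{d}\bigr)^{k+\frac12}$ really does satisfy $\bigl|\bigl(\frac{c}{d}\bigr)\bigl(\frac{-4}{d}\bigr)^{k+\frac12}\bigr|=1$ and that the cocycle relation for this multiplier system is consistent with the action on cosets, so that the term-by-term unfolding is legitimate; the rapid decay of the cusp form $f$ guarantees absolute convergence and hence the interchange of summation and integration. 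Once these multiplier bookkeeping points are settled, the remainder is the routine Gamma-integral evaluation.
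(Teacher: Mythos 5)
Your proof is correct: the paper states this lemma without proof as a well-known fact, and your unfolding argument (cancel the unit-modulus theta multiplier, fold the coset sum into a change of variables, reduce to the strip $\Gamma_\infty\backslash\mathcal H$, then apply orthogonality and the Gamma integral $\int_0^\infty e^{-4\pi n y}y^{k-\frac{1}{2}}\frac{dy}{y}=\Gamma(k-\tfrac{1}{2})/(4\pi n)^{k-\frac{1}{2}}$) is precisely the standard derivation. It is also the same unfolding technique the paper itself deploys in its proof of Theorem~\ref{main2}, so your argument is fully consistent with the paper's approach.
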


 The following lemmas tell about the growth of the Fourier coefficients of a modular form. 
\begin{lem}\label{convergence}\cite{iwa}
If $f\in M_k (\Gamma,\chi)$ with Fourier coefficients $a(n),$ then 
$$
a(n) \ll |n|^{k-1+\epsilon}, 
$$  
and moreover, if $f$ is a cusp form, then
$$
a(n) \ll |n|^{\frac{k}{2}-\frac{1}{4}+\epsilon}.
$$  
\end{lem}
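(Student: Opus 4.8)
The plan is to estimate the Fourier coefficients by combining the standard contour-integral representation with the known growth of $f$ towards the cusps, and then to sharpen the bound in the cuspidal case via the Rankin--Selberg method. For a positive integer $n$ and any $y>0$ one has the representation
\[
a(n)\,e^{-2\pi n y}=\int_{0}^{1} f(x+iy)\,e^{-2\pi i n x}\,dx,
\]
so that $|a(n)|\leq e^{2\pi n y}\sup_{x}|f(x+iy)|$ for every $y$. For a general $f\in M_k(\Gamma,\chi)$ the growth estimate $|f(x+iy)|\ll y^{-k}$ as $y\to 0$ (valid since $f$ is holomorphic, hence bounded, at every cusp) yields only $|a(n)|\ll n^{k}$; to obtain the sharper exponent $k-1$ I would instead use the orthogonal decomposition $M_k(\Gamma,\chi)=\mathcal E_k(\Gamma,\chi)\oplus S_k(\Gamma,\chi)$ into an Eisenstein part and a cuspidal part. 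The Eisenstein part has Fourier coefficients of divisor type, $O(\sigma_{k-1}(n))=O(n^{k-1+\epsilon})$, while the cuspidal part is controlled by the second estimate below; since $\tfrac{k}{2}\leq k-1$ for $k\geq 2$, the first bound $a(n)\ll|n|^{k-1+\epsilon}$ follows.

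For the cuspidal bound I would first record the elementary Hecke estimate. If $f\in S_k(\Gamma)$ then $y^{k/2}|f(z)|$ is bounded on $\mathcal H$, so the representation above with the choice $y=1/n$ gives $|a(n)|\ll n^{k/2}$. The claimed exponent $\tfrac{k}{2}-\tfrac14$ lies strictly below this, and to reach it I would invoke the Rankin--Selberg method applied to the Dirichlet series $\sum_{n\geq 1}|a(n)|^2 n^{-s}$. Unfolding this series against a real-analytic Eisenstein series produces its meromorphic continuation and functional equation, from which one extracts the mean-square asymptotic $\sum_{n\leq X}|a(n)|^2=c_f X^{k}+O(X^{k-\delta})$ with a power-saving error term of classical Rankin--Selberg strength. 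Inserting this into a short-interval estimate $\sum_{X<m\leq X+H}|a(m)|^2\ll HX^{k-1}+X^{k-\delta}$ and using the trivial inequality $|a(n)|^2\leq\sum_{X<m\leq X+H}|a(m)|^2$ for $n\asymp X$, an optimal choice of $H$ then beats the Hecke exponent, and the classical quality of the error term yields precisely $a(n)\ll|n|^{k/2-1/4+\epsilon}$.

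The main obstacle is exactly this last refinement: the passage below the Hecke bound is not elementary and rests on the analytic properties (meromorphic continuation, functional equation, and the resulting power-saving error term) of the Rankin--Selberg convolution, which is, fittingly, the very $L$-function studied in the rest of this paper. Since these facts are classical, I would simply invoke them through the cited reference \cite{iwa} rather than reproving them; the half-integral weight case is handled identically, using the metaplectic Rankin--Selberg integral in place of the integral-weight one. The remaining ingredients, namely the contour representation, the cusp growth bounds, and the Eisenstein/cusp decomposition, are routine.
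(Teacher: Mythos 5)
The paper itself offers no proof of this lemma: it is quoted directly from Iwaniec's book \cite{iwa}, where the cuspidal bound is obtained from the Petersson formula together with Weil's bound for Kloosterman sums (and Sali\'e sums in the half-integral weight case), not from the mean-square method you propose. Your treatment of the non-cuspidal part is fine (Eisenstein/cusp decomposition, divisor-type bounds for Eisenstein coefficients), and so is the Hecke bound $a(n)\ll n^{k/2}$.

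The genuine gap is in your final refinement. The Rankin--Selberg method yields $\sum_{n\leq X}|a(n)|^2=c_fX^k+O(X^{k-\delta})$ with the classical exponent $\delta=2/5$ (Rankin, Selberg), not $\delta=1/2$. Moreover your short-interval optimization is vacuous: since the term $HX^{k-1}$ increases with $H$, the optimal choice is $H=1$, and the bound is governed entirely by the error term, giving $|a(n)|^2\ll n^{k-\delta}$, i.e.\ $a(n)\ll n^{k/2-\delta/2}$. With $\delta=2/5$ this is $a(n)\ll n^{k/2-1/5}$, which falls strictly short of the claimed $n^{k/2-1/4+\epsilon}$. To reach the exponent $1/4$ by your route you would need a square-root-strength error term $\delta=1/2-\epsilon$ in the Rankin--Selberg asymptotic; this is not classical but a well-known open problem (the best known improvements over $2/5$ are minute). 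The bound $a(n)\ll n^{k/2-1/4+\epsilon}$ genuinely requires a different mechanism: express the coefficient via Poincar\'e series / the Petersson formula and apply Weil's bound $S(m,n;c)\ll c^{1/2+\epsilon}$ to the resulting Kloosterman sums; for half-integral weight the Kloosterman sums degenerate to Sali\'e sums, which can be evaluated explicitly and give the same square-root cancellation, hence the same saving of $1/4$. That is the argument in the cited reference, and it is the one step of your proposal that cannot be repaired within the Rankin--Selberg framework, so the citation you fall back on does not actually support the intermediate fact you invoke.
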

\begin{lem}\label{convergence half}
If $f\in M_{k+\frac{1}{2}} (\Gamma,\chi)$ with Fourier coefficients $a(n),$ then 
$$
a(n) \ll |n|^{k-\frac{1}{2}+\epsilon}, 
$$  
and moreover, if $f\in S_{k+\frac{1}{2}} (\Gamma,\chi)$ is a cusp form, then
$$
a(n) \ll |n|^{\frac{k}{2}+\epsilon}.
$$  
\end{lem}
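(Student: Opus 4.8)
The plan is to follow the argument behind \lemref{convergence}, with the integral weight $k$ replaced throughout by the ``weight'' $w:=k+\tfrac12$: under this substitution the two claims read $a(n)\ll|n|^{w-1+\epsilon}$ and, for cusp forms, $a(n)\ll|n|^{w/2-1/4+\epsilon}$, which are the classical Hecke--Eisenstein and Rankin--Selberg estimates. The feature that makes the half-integral weight case structurally identical is that the multiplier $\left(\frac{c}{d}\right)\left(\frac{-4}{d}\right)^{k+\frac12}$ on $\Gamma_0(4)$ has modulus one, so that $|cz+d|^{w}$ still governs the size of the automorphy factor and the non-holomorphic quantity $y^{w/2}|f(z)|$ remains $\Gamma_0(4)$-invariant.

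For the cuspidal bound I would exploit the Poincar\'e series already introduced rather than reprove Hecke's estimate from scratch. By \lemref{poincare-lemma half} the $n$-th coefficient is recovered as $a(n)=\tilde\alpha_{k,n}^{-1}\langle f,P_{k+\frac12,n}\rangle$, so the Cauchy--Schwarz inequality gives $|a(n)|\le \tilde\alpha_{k,n}^{-1}\,\|f\|\,\|P_{k+\frac12,n}\|$. Applying \lemref{poincare-lemma half} a second time, now to $P_{k+\frac12,n}$ itself, yields $\|P_{k+\frac12,n}\|^2=\tilde\alpha_{k,n}\,p_n(n)$, where $p_n(n)$ is the $n$-th Fourier coefficient of $P_{k+\frac12,n}$. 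Hence
\[
|a(n)|\ \le\ \|f\|\,\sqrt{p_n(n)/\tilde\alpha_{k,n}},
\]
and since $\tilde\alpha_{k,n}\asymp n^{-(k-\frac12)}$, everything reduces to an estimate for $p_n(n)$. Here I would insert the Fourier expansion of the half-integral weight Poincar\'e series,
\[
p_n(n)=1+C_k\sum_{4\mid c}\frac{K(n,n;c)}{c}\,J_{k-\frac12}\!\left(\frac{4\pi n}{c}\right),
\]
with an explicit constant $C_k$, where $K(n,n;c)$ is the Kloosterman-type sum attached to the theta multiplier; the Weil-type bound $|K(n,n;c)|\ll c^{1/2+\epsilon}(n,c)^{1/2}$ together with the uniform estimate $J_{k-\frac12}(x)\ll\min(x^{k-\frac12},x^{-1/2})$ makes the $c$-sum converge to $O(n^{1/2+\epsilon})$. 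Thus $p_n(n)\ll n^{1/2+\epsilon}$, and feeding this back gives $|a(n)|\ll \|f\|\,n^{k/2+\epsilon}$, as required.

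For a general $f\in M_{k+\frac12}(\Gamma,\chi)$ I would use the orthogonal decomposition into its Eisenstein and cuspidal components, $f=\mathcal E+g$. The cuspidal part is controlled by the previous paragraph, $a_g(n)\ll n^{k/2+\epsilon}$, which is dominated by $n^{k-\frac12+\epsilon}$; it therefore suffices to bound the coefficients of $\mathcal E$. These are, up to normalisation, generalised (Cohen-type) divisor sums of size $\ll n^{w-1+\epsilon}=n^{k-\frac12+\epsilon}$, which yields the first assertion.

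The step I expect to be the main obstacle is the estimation of the Kloosterman--Bessel sum in the displayed expansion of $p_n(n)$: one must have at hand the half-integral weight Petersson formula for $\Gamma_0(4)$, the Weil-type bound for the exponential sums $K(n,n;c)$ carrying the theta multiplier, and sufficiently uniform bounds for the Bessel function $J_{k-\frac12}$. Once the resulting bound $p_n(n)\ll n^{1/2+\epsilon}$ is in place the exponent $k/2+\epsilon$ drops out automatically; the remaining archimedean bookkeeping is routine and parallels \lemref{convergence}.
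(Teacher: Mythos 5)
The paper never actually proves this lemma: it is stated as a known fact, in exact parallel with \lemref{convergence}, whose integral-weight statement is simply cited to Iwaniec's book. So there is no in-paper argument to compare against; what you have written is a reconstruction of the standard proof from the literature, and it is essentially correct. Your exponent bookkeeping checks out: with $w=k+\tfrac12$, the Petersson/Cauchy--Schwarz step gives $|a(n)|\ll \|f\|\, n^{(w-1)/2}\sqrt{p_n(n)}$, and the bound $p_n(n)\ll n^{1/2+\epsilon}$ turns this into $n^{k/2+\epsilon}$, matching the claim (the same computation with $w=k$ produces the exponent $\tfrac{k}{2}-\tfrac14+\epsilon$ of \lemref{convergence}, which is a good consistency check). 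The decomposition $f=\mathcal{E}+g$, with the Eisenstein coefficients being Cohen-type generalized class numbers of size $\ll n^{k-\frac12+\epsilon}$, correctly yields the first bound.

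Two caveats should be made explicit. First, your route through $P_{k+\frac12,n}$ requires absolute convergence of the Poincar\'e series, i.e. $k+\tfrac12>2$; the lemma as stated carries no restriction on $k$, so the remaining small weights (in particular the weight $\tfrac32$ case, where the Eisenstein theory is genuinely delicate) would need a separate argument, though in this paper the lemma is only used in ranges where your argument applies. Second, the ``Kloosterman-type sums attached to the theta multiplier'' are Sali\'e sums; the Weil-quality bound $\ll c^{1/2+\epsilon}(n,c)^{1/2}$ you invoke does hold for them --- indeed they admit a closed-form evaluation, so this step is easier than in the integral-weight case, not harder --- but it is a fact to be imported (it is in Iwaniec's book), not a consequence of the ordinary Weil bound for classical Kloosterman sums. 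With those two points patched, your sketch is a complete proof and is, for all practical purposes, the argument the paper implicitly relies on when it quotes the lemma.
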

\subsection{Rankin-Cohen Brackets} 
Let $k$ and $l$ be real numbers and  $\nu \ge 0$ be an integer.  Let $f$ and $g$ be two complex valued holomorphic functions on $\mathcal H. $
 Define the $\nu$-th Rankin-Cohen bracket of $f$ and $g$ by 
\begin{equation}
[f,g]_\nu :=\sum_{r=0}^\nu C_r(k,l; \nu)D^rf D^{\nu-r}g,
\end{equation}
 where $D^rf=\dfrac{1}{(2\pi i)^r}\dfrac{d^rf}{dz^r}$ and $C_r(k,l; \nu)~=~(-1)^{\nu-r}\binom{\nu}{r}\dfrac{\Gamma(k+\nu)\Gamma (l+\nu)}{\Gamma(k+r)\Gamma (l+\nu-r)}$ and $\Gamma (x)$ is the usual Gamma function.
\begin{rmk}
It is easy to verify that
\begin{equation}\label{lm}
[f|_k \gamma, g|_l \gamma ]_\nu= [f, g]|_{k+l+2\nu} \gamma, ~~ \forall \gamma \in \Gamma.
\end{equation}
\end{rmk}
\begin{rmk}
We note that the $0$-th Rankin-Cohen bracket is the usual product of modular forms i.e.,  $ [f, g]_0= fg.$ 
\end{rmk}
\begin{thm}\cite{co}
Let $\nu \ge 0$ be an integer and $f \in M_k(\Gamma,\chi_1)$ and $g \in M_l(\Gamma,\chi_2).$ 
Then $ [f, g]_\nu \in M_{k+l+2\nu}(\Gamma, \chi_1\chi_2\chi)$,\\ \\
\hspace{.2in} where $\chi =\begin{cases}
                                                         1,&\;if\;both\;k,l\in\mathbb{Z}, \\ \chi_{-4}^k,&\;if\;k\in \mathbb{Z}\;and\;l\in \mathbb{Z}+\frac{1}{2},\\ \chi_{-4}^l,&\;if\;k\in \mathbb{Z}+\frac{1}{2}\; and\;l\in \mathbb{Z},\\ \chi=\chi_{-4}^{k+l}&\;if\; both \;k,l\in \mathbb{Z}+\frac{1}{2},
                                                       \end{cases}$\\ \\
Moreover if  $\nu >0,$ then $ [f, g]_\nu \in S_{k+l+2\nu}(\Gamma,\chi_1\chi_2\chi).$
 In fact, 
$[~, ~]_\nu$ is a bilinear map from $M_k(\Gamma,\chi_1) \times M_l(\Gamma,\chi_2)$ to $M_{k+l+2\nu}(\Gamma, \chi_1\chi_2\chi).$
Here $\chi_{-4}$ is the character defined by $\chi_{-4}(x)= (\frac{-4}{x}).$
\end{thm}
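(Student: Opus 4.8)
The plan is to verify three things, after which bilinearity of $[~,~]_\nu$ is immediate from its definition: that $[f,g]_\nu$ obeys the weight $(k+l+2\nu)$ transformation law with the asserted character, that it is holomorphic at every cusp, and that when $\nu>0$ its constant term vanishes at every cusp. The one structural input is the covariance \eqref{lm}. First I would record it in the form actually needed: for arbitrary real weights it holds for the \emph{bare} slash $f\,{\|}_{\kappa}\,\gamma:=(cz+d)^{-\kappa}f(\gamma z)$, since \eqref{lm} is a formal identity in the derivatives $D^rf,\,D^{\nu-r}g$ that never refers to a multiplier system. Writing the slash uniformly as $f\,\tilde{\mid}_{\kappa}\,\gamma=\mu_\kappa(\gamma)\,f\,{\|}_{\kappa}\,\gamma$, with $\mu_\kappa(\gamma)=1$ for $\kappa\in\mathbb Z$ (so that $\tilde{\mid}_{\kappa}$ is the ordinary slash $\mid_{\kappa}$) and $\mu_\kappa(\gamma)=\left(\frac{c}{d}\right)\left(\frac{-4}{d}\right)^{\kappa}$ for $\kappa\in\mathbb Z+\frac12$ (read as in the definition above), the multiplier is independent of $z$ and hence factors through the bracket.

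For the transformation law I would combine the modularity of $f$ and $g$ with bilinearity and \eqref{lm} to obtain, for every $\gamma=\medmatrix a b c d\in\Gamma$,
$$[f,g]_\nu\,\tilde{\mid}_{k+l+2\nu}\,\gamma=\chi_1(d)\chi_2(d)\,\frac{\mu_{k+l+2\nu}(\gamma)}{\mu_k(\gamma)\,\mu_l(\gamma)}\,[f,g]_\nu,$$
so that everything reduces to evaluating the ratio of multipliers in the four cases. When $k,l\in\mathbb Z$ each $\mu$ equals $1$ and the ratio is $1$. In the mixed case $k\in\mathbb Z,\,l\in\mathbb Z+\frac12$ the Kronecker factors cancel and the ratio collapses to $\left(\frac{-4}{d}\right)^{k+2\nu}=\chi_{-4}(d)^{k}$; the essential point is that $\chi_{-4}$ is quadratic, so $\chi_{-4}(d)^{2\nu}=1$ and the weight shift $2\nu$ is invisible to the character. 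The other mixed case is symmetric, giving $\chi_{-4}^{\,l}$. When $k,l\in\mathbb Z+\frac12$ the target weight $k+l+2\nu$ is an integer, so $\mu_{k+l+2\nu}=1$, while $\left(\frac{c}{d}\right)^2=1$ kills the Kronecker factors, leaving $\left(\frac{-4}{d}\right)^{k+l}=\chi_{-4}(d)^{k+l}$. These four outcomes are exactly the stated $\chi$.

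For holomorphy and cuspidality I would use that $D^r$ acts on a Fourier expansion by $D^r\!\big(\sum_m a(m)q^m\big)=\sum_m m^r a(m)q^m$ (with the evident rescaling at a cusp of width $N$). Hence $D^r$ preserves holomorphy at the boundary, so each summand $D^rf\,D^{\nu-r}g$, and therefore $[f,g]_\nu$, is holomorphic at every cusp; this already yields $[f,g]_\nu\in M_{k+l+2\nu}$. If $\nu>0$, then in each summand at least one of $r,\nu-r$ is positive, and $D^s$ with $s\ge1$ annihilates the constant term, so the $q^0$-coefficient of $[f,g]_\nu$ at $\infty$ vanishes. To reach an arbitrary cusp $\mathfrak a=\sigma\infty$ I would apply the bare-slash covariance to the scaling matrix $\sigma$ and note that $(cz+d)^{-k}f(\sigma z)$ and $(cz+d)^{-l}g(\sigma z)$ again expand in nonnegative powers of $q^{1/N}$, so the same computation forces the constant term of $[f,g]_\nu$ at $\mathfrak a$ to vanish. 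Thus $[f,g]_\nu$ is a cusp form when $\nu>0$.

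I expect the genuine obstacle to be the multiplier bookkeeping in the two half-integral cases: one must track the Kronecker symbol and the powers of $\left(\frac{-4}{d}\right)$ contributed by $f$, by $g$, and by the target weight, and confirm that the quadratic identities $\chi_{-4}^2=1$ and $\left(\frac{c}{d}\right)^2=1$ are precisely what clear the spurious $2\nu$-contribution and the squared Kronecker factor. The only other point needing care is the routine verification that \eqref{lm}, stated for integral weight, holds verbatim for the bare slash at real weight; granting this, the remainder is the elementary constant-term argument above.
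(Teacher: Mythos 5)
Your proposal is correct, but there is nothing in the paper to compare it against: the paper does not prove this theorem at all, it imports it from Cohen \cite{co}, and the only internal trace of an argument is the unproved remark recording the covariance \eqref{lm}. What you have done is reconstruct the standard proof that the paper outsources, and your reconstruction is sound where it matters. Writing each slash as a $z$-independent multiplier $\mu_\kappa(\gamma)$ times the bare slash, and reducing the whole transformation law to the ratio $\mu_{k+l+2\nu}(\gamma)\big/\bigl(\mu_k(\gamma)\mu_l(\gamma)\bigr)$, is the right organizing device, and your four case evaluations are exactly the stated table: the two points you isolate --- that $\chi_{-4}^{2\nu}=1$ makes the weight shift $2\nu$ invisible to the character, and that $\left(\frac{c}{d}\right)^2=1$ removes the Kronecker symbols so that the ratio depends on $d$ alone and hence genuinely defines a character --- are precisely the content of the case distinctions, and they are also the same bookkeeping the authors themselves perform inside the proof of their Theorem~\ref{main2} (the step where ``the quantity appearing before integral is equals to 1''). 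The constant-term argument for cuspidality when $\nu>0$, including the passage to an arbitrary cusp via a scaling matrix and bare-slash covariance, is the standard one and is fine. The one debt you acknowledge --- that \eqref{lm}, stated in the paper only for integral weights, holds for the bare slash at real weights --- is genuine, and it is really where the substance of Cohen's theorem lives; but it is routine in the sense you indicate: the usual proof is a formal chain-rule and binomial-coefficient computation in which $(cz+d)$ enters only through a fixed branch of its real powers, and since all powers involve the same base $cz+d$ for a single fixed $\gamma$, no branch ambiguities interact. A complete write-up should include that verification; with it, your argument is a self-contained proof of the statement, which is more than the paper provides.
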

Let $k,l \in \frac{\mathbb{Z}}{2} $ and $\nu \ge 0$ be integers and $\Gamma$ be a congruence subgroup of the full modular 
group $SL_2(\mathbb{Z}).$ Also assume that that $\Gamma \subseteq \Gamma_0(4)$ if either of $k$ or $l$ is non integer.
For a fixed $g \in M_l(\Gamma,\chi_2),$ we define the map 
$$T_{g, \nu}: S_k (\Gamma) \rightarrow S_{k+l+2\nu} (\Gamma, \chi_2)$$
defined by  $T_{g, \nu} (f)= [f,g]_\nu.$ $T_{g, \nu}$ is a $\mathbb C$-linear map of finite dimensional Hilbert spaces and therefore has an adjoint map $T_{g, \nu}^*: S_{k+l+2\nu} (\Gamma, \chi_2) \rightarrow S_k (\Gamma)$ such that
$$
\langle f, T_{g, \nu}(h) \rangle = \langle  T_{g, \nu}^*(f), h \rangle, ~~~ \forall f \in S_{k+l+2\nu} (\Gamma, \chi_2)~~~ {\rm and} ~~~h \in S_k (\Gamma).
$$
In \cite{herrero} S.D. Herrero computed the adjoint map for the case when $k, l \in \mathbb{Z}, \Gamma=SL_2(\mathbb{Z})$ 
and $\chi_2$ is the trivial character.
\begin{thm}\label{herrero}\cite{herrero} Let $k\geq 6$ and $l$ be natural numbers and $\nu\geq 0$. 
Let $g \in M_l(SL_2(\mathbb{Z}))$ with Fourier expansion 
$$
g(z)= \sum_{m=0}^\infty b(m) q^m.
$$
Suppose that either (a) $g$ is a cusp form or (b) $g$ is not cusp form and $l<k-3.$
Then the image of any cusp form $f \in S_{k+l+2\nu}(SL_2(\mathbb{Z}))$ with Fourier expansion 
$$
f(z)= \sum_{m=1}^\infty a(m) q^m
$$
under $T_{g, \nu}^*$ is given by 
$$
T_{g, \nu}^*(f)(z)=  \sum_{n=1}^\infty c(n) q^{n},
$$
where
\begin{equation}\label{eq2}
c(n)= \beta (k,l,\nu;n)L_{f,g,\nu,n}(\gamma),
\end{equation} where $L_{f,g,\nu,n}$ is the $L$- function associated with $f$ and $g,$ defined by\\
for $s\in \mathbb{C},$
$$L_{f,g,\nu,n}(s)=\sum_{m=1}^\infty\frac{a(n+m)\overline{b(m)}~~\alpha(k,l,\nu,n,m)}{(n+m)^{s}}$$
with 
$$\alpha (k,l,\nu,n,m)\;=\;\sum_{r=0}^\nu(-1)^{\nu-r}\binom{\nu}{r}\frac{\Gamma(k+\nu)\Gamma (l+\nu)}{\Gamma(k+r)\Gamma (l+\nu-r)}n^rm^{\nu-r}$$
and
$$\gamma=k+l+2\nu-1,\;\beta (k,l,\nu;n)=\frac{\Gamma (k+l+2\nu-1)~n^{k-1}}{\Gamma (k-1)(4\pi)^{l+2\nu}}.$$
\end{thm}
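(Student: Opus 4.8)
The plan is to run Kohnen's method: determine the unknown Fourier coefficients of $T_{g,\nu}^*(f)$ by pairing against Poincar\'e series and then evaluate the resulting Petersson product by Rankin--Selberg unfolding. Write $T_{g,\nu}^*(f)(z)=\sum_{n\ge 1}c(n)q^n$. Since $T_{g,\nu}^*(f)\in S_k(SL_2(\mathbb Z))$, \lemref{poincare-lemma} gives $\langle T_{g,\nu}^*(f),P_{k,n}\rangle=\alpha_{k,n}\,c(n)$, so $c(n)=\alpha_{k,n}^{-1}\langle T_{g,\nu}^*(f),P_{k,n}\rangle$. First I would apply the defining adjoint relation with $h=P_{k,n}$ to rewrite this as $\langle f,T_{g,\nu}(P_{k,n})\rangle=\langle f,[P_{k,n},g]_\nu\rangle$, which transfers everything onto the known data $f$, $g$ and the Poincar\'e series.

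Next I would unwind the bracket. Using the definition \eqref{poincare} of $P_{k,n}$, the covariance \eqref{lm} of the Rankin--Cohen bracket, and the modularity $g\mid_l\gamma=g$ (here $\chi_2$ is trivial), the bracket becomes the Poincar\'e-type series
\[
[P_{k,n},g]_\nu=\sum_{\gamma\in\Gamma_\infty\setminus\Gamma}[e^{2\pi i nz},g]_\nu\,\big|_{k+l+2\nu}\,\gamma .
\]
Feeding this into the Petersson integral and unfolding against the cusp form $f$ collapses the quotient $\Gamma\setminus\mathcal H$ to the strip $\Gamma_\infty\setminus\mathcal H$, yielding $\langle f,[P_{k,n},g]_\nu\rangle=\int_{\Gamma_\infty\setminus\mathcal H}f(z)\,\overline{[e^{2\pi i nz},g]_\nu(z)}\,(\mathrm{Im}\,z)^{k+l+2\nu}\,d^*z$.

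The bracket itself I would compute termwise: since $D^r e^{2\pi i nz}=n^r e^{2\pi i nz}$ and $D^{\nu-r}g=\sum_m b(m)m^{\nu-r}q^m$, the coefficient of $q^{n+m}$ works out to $b(m)\,\alpha(k,l,\nu,n,m)$, so $[e^{2\pi i nz},g]_\nu=\sum_m b(m)\,\alpha(k,l,\nu,n,m)\,e^{2\pi i (n+m)z}$. Writing $f=\sum_t a(t)q^t$, the $x$-integral over $[0,1]$ forces $t=n+m$ by orthogonality of the exponentials, while the remaining $y$-integral is the Gamma integral $\int_0^\infty e^{-4\pi(n+m)y}y^{k+l+2\nu-2}\,dy=\Gamma(k+l+2\nu-1)\,(4\pi(n+m))^{-(k+l+2\nu-1)}$. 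Assembling the terms gives
\[
\langle f,[P_{k,n},g]_\nu\rangle=\frac{\Gamma(k+l+2\nu-1)}{(4\pi)^{k+l+2\nu-1}}\,L_{f,g,\nu,n}(\gamma),\qquad \gamma=k+l+2\nu-1,
\]
and substituting $\alpha_{k,n}=\Gamma(k-1)(4\pi n)^{-(k-1)}$ and simplifying the constant reproduces $c(n)=\beta(k,l,\nu;n)L_{f,g,\nu,n}(\gamma)$, i.e.\ \eqref{eq2}.

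The step I expect to be the main obstacle is the rigorous justification of the unfolding: interchanging the $\gamma$-summation with the integration and the $m$-summation with the $x,y$-integrals. This hinges on the absolute and locally uniform convergence of $\sum_{\gamma}[e^{2\pi i nz},g]_\nu\mid_{k+l+2\nu}\gamma$ and, equivalently, on the absolute convergence of $L_{f,g,\nu,n}(s)$ at $s=\gamma$. Estimating the generic summand via \lemref{convergence} --- the cusp bound for $a(n+m)$ together with $b(m)\ll m^{l/2-1/4+\epsilon}$ when $g$ is cuspidal, or the weaker $b(m)\ll m^{l-1+\epsilon}$ otherwise, and the polynomial growth $\alpha(k,l,\nu,n,m)=O(m^\nu)$ for fixed $n$ --- shows that the tail in $m$ converges. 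This is exactly what the two hypotheses encode: case (a) requires only that $k$ be large enough, while the larger coefficient growth of a non-cuspidal $g$ in case (b) is compensated precisely by the condition $l<k-3$. Away from this analytic point the proof is the bookkeeping above.
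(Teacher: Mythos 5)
Your proposal is correct and follows essentially the same route as the paper's own argument (the Kohnen--Herrero method, reused verbatim in the proof of Theorem~\ref{main2}): pair $T_{g,\nu}^{*}(f)$ against $P_{k,n}$ via Lemma~\ref{poincare-lemma}, move $T_{g,\nu}$ across the Petersson product, unfold using the covariance \eqref{lm}, evaluate the Gamma integral termwise, and invoke the hypotheses (a)/(b) exactly where you say --- to justify the interchange of summation and integration (the analogue of Lemma~\ref{mainlemma}, i.e.\ Herrero's Lemma~1). One caveat that your write-up shares with, rather than adds to, the paper: since $g(z)=\sum_{m\geq 0}b(m)q^{m}$, the unfolded sum runs over $m\geq 0$, and in case (b) the $m=0$ term does not vanish (one computes $\alpha(k,l,\nu,n,0)=\frac{\Gamma(l+\nu)}{\Gamma(l)}\,n^{\nu}\neq 0$), so the $L$-series in the stated formula should really begin at $m=0$ --- a defect of the statement as transcribed, not of your method.
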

\begin{rmk}
One can prove the similar result for the case when $\Gamma$ is a congruence subgroup of level $N$ and $\chi_2$ is any
character mod $N$ using the technique used in proof of \thmref{main2}.  
\end{rmk}
\section{Statement of the Theorem}
Consider the following maps:
\begin{enumerate}
\item \label{case1}
$T_{g, \nu}: S_{k+\frac{1}{2}} (\Gamma) \rightarrow S_{k+l+2\nu+1} (\Gamma, \chi_2\chi),\;{\rm{with}}\;g\in M_{l+\frac{1}{2}}(\Gamma,\chi_2),$
\item \label{case2}
$T_{g, \nu}: S_k (\Gamma) \rightarrow S_{k+l+2\nu+\frac{1}{2}} (\Gamma, \chi_2\chi),\;{\rm{with}}\;g\in M_{l+\frac{1}{2}}(\Gamma,\chi_2),$
\item \label{case3}
$T_{g, \nu}: S_{k+\frac{1}{2}} (\Gamma) \rightarrow S_{k+l+2\nu+\frac{1}{2}} (\Gamma, \chi_2\chi),\;{\rm{with}}\;g\in M_l(\Gamma,\chi_2)$
\end{enumerate}
We exhibit explicitly the Fourier coefficients of $T_{g, \nu}^* (f)$ for $f \in S_{k+l+2\nu+1} (\Gamma, \chi_2\chi)$ in (\ref{case1}) 
and by using the same method, we can find the analogous maps in (\ref{case2}) and (\ref{case3}) (see the remark \ref{rmkcase23}).
These involve special values of certain 
Dirichlet series of Rankin- Selberg type associated to $f$ and $g.$ We now state the main theorem.
\begin{thm}\label{main2} Let $k$ and $l$ be natural numbers and $\nu\geq 0$. 
Let $g \in M_{l+\frac{1}{2}}(\Gamma,\chi_2)$ with Fourier expansion 
$$
g(z)= \sum_{m=0}^\infty b(m) q^m.
$$
Suppose that either (a) $g$ is a cusp form and $k>2$ or (b) $g$ is not cusp form and $l<k-\frac{3}{2}.$
Then the image of any cusp form $f \in S_{k+l+2\nu+1}(\Gamma, \chi_2\chi)$ with Fourier expansion 
$$
f(z)= \sum_{m=1}^\infty a(m) q^m
$$
under $T_{g, \nu}^*$ is given by 
$$
T_{g, \nu}^*(f)(z)=  \sum_{n=1}^\infty c(n) q^{n},
$$
where
\begin{equation}\label{eq2}
c(n)= \beta (k,l,\nu;n)L_{f,g,\nu,n}(\gamma),
\end{equation}
where
$$\gamma=k+l+2\nu,\;\beta (k,l,\nu;n)=\frac{\Gamma (k+l+2\nu)~n^{k-\frac{1}{2}}}{\Gamma (k-\frac{1}{2})(4\pi)^{l+2\nu+\frac{1}{2}}}$$
and $L_{f,g,\nu,n}(\gamma)$ is defined in \thmref{herrero}.
\end{thm}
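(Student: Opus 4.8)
The plan is to recover the Fourier coefficients $c(n)$ of $F:=T_{g,\nu}^*(f)$ by pairing $F$ with the half-integral weight Poincar\'e series. By \lemref{poincare-lemma half} we have $\langle F,P_{k+\frac12,n}\rangle=\tilde\alpha_{k,n}\,c(n)$, while the defining property of the adjoint gives $\langle F,P_{k+\frac12,n}\rangle=\langle T_{g,\nu}^*(f),P_{k+\frac12,n}\rangle=\langle f,T_{g,\nu}(P_{k+\frac12,n})\rangle=\langle f,[P_{k+\frac12,n},g]_\nu\rangle$. Thus the theorem reduces to evaluating the single inner product $\langle f,[P_{k+\frac12,n},g]_\nu\rangle$ and dividing by $\tilde\alpha_{k,n}=\Gamma(k-\frac12)/(4\pi n)^{k-\frac12}$.

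First I would rewrite the bracket of the Poincar\'e series as a Poincar\'e series of integral weight. Writing $P_{k+\frac12,n}=\sum_{\gamma\in\Gamma_\infty\setminus\Gamma}e^{2\pi inz}\tilde{\mid}_{k+\frac12}\gamma$ and using $g\tilde{\mid}_{l+\frac12}\gamma=\chi_2(d)g$ together with the half-integral analogue of \eqref{lm}, bilinearity turns each summand into $\chi_2(d)^{-1}[e^{2\pi inz},g]_\nu$ slashed by $\gamma$; here the product of the two theta-multipliers collapses to $\left(\frac{c}{d}\right)^2\left(\frac{-4}{d}\right)^{k+l+1}=\chi(d)$, which is precisely the origin of the character $\chi=\chi_{-4}^{k+l+1}$ in the target space. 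Summing over cosets exhibits $[P_{k+\frac12,n},g]_\nu$ as an integral weight $k+l+2\nu+1$ Poincar\'e series with seed $\psi:=[e^{2\pi inz},g]_\nu$ and character $\chi_2\chi$, the same character carried by $f$. The Rankin-Selberg unfolding then folds the fundamental domain down to the strip $\Gamma_\infty\setminus\mathcal{H}$, yielding $\langle f,[P_{k+\frac12,n},g]_\nu\rangle=\int_{\Gamma_\infty\setminus\mathcal{H}}f(z)\overline{\psi(z)}\,y^{k+l+2\nu+1}\,d^*z$.

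It then remains to compute this strip integral. Since $D^r e^{2\pi inz}=n^r e^{2\pi inz}$ and $D^{\nu-r}g=\sum_m m^{\nu-r}b(m)q^m$, the seed expands as $\psi(z)=\sum_{m\ge0}\alpha(k,l,\nu,n,m)\,b(m)\,e^{2\pi i(n+m)z}$, where $\alpha(k,l,\nu,n,m)=\sum_{r=0}^{\nu}C_r(k,l;\nu)\,n^r m^{\nu-r}$ is the coefficient appearing in \thmref{herrero}. Inserting the Fourier expansion of $f$ and integrating in $x$ over $[0,1]$ selects the index $n+m$ by orthogonality of the exponentials, and the remaining integral $\int_0^\infty e^{-4\pi(n+m)y}y^{k+l+2\nu-1}\,dy=\Gamma(k+l+2\nu)/(4\pi(n+m))^{k+l+2\nu}$ produces the Dirichlet series. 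Since $\alpha$ is real, this equals $\Gamma(k+l+2\nu)(4\pi)^{-(k+l+2\nu)}L_{f,g,\nu,n}(k+l+2\nu)$, and dividing by $\tilde\alpha_{k,n}$ while simplifying the powers of $4\pi$ gives exactly $c(n)=\beta(k,l,\nu;n)L_{f,g,\nu,n}(\gamma)$ with the stated $\beta$ and $\gamma=k+l+2\nu$. The term $m=0$ drops out precisely when $g$ is a cusp form (since then $b(0)=0$), which accounts for the series being written from $m=1$; under hypothesis (b) the constant-term contribution has to be tracked separately.

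The main difficulty lies not in the algebra but in justifying the two analytic steps. One must establish absolute convergence of the Poincar\'e series and of the unfolded integral in order to legitimate the interchange of summation and integration: for cuspidal $g$ the decay of both $f$ and $\psi$ suffices once $k>2$, whereas for non-cuspidal $g$ the seed $\psi$ decays as $y\to\infty$ only because of the factor $e^{2\pi inz}$ with $n\ge1$, and controlling its behaviour forces the hypothesis $l<k-\frac32$, which is exactly what the growth estimates in \lemref{convergence} and \lemref{convergence half} are needed for. The second delicate point is the half-integral bookkeeping: I would have to verify carefully that the transformation law \eqref{lm} persists for the theta-multiplier slash $\tilde{\mid}$ and that the product of the two half-integral multipliers collapses to the integral weight multiplier times $\chi(d)$, so that $[P_{k+\frac12,n},g]_\nu$ and $f$ truly share the character $\chi_2\chi$ and the unfolding is valid.
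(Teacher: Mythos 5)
Your proposal is correct and takes essentially the same approach as the paper: pair $T_{g,\nu}^*(f)$ with the half-integral weight Poincar\'e series, use the adjoint relation to reduce everything to $\langle f,[P_{k+\frac{1}{2},n},g]_\nu\rangle$, unfold this inner product via the cancellation of the theta-multipliers and characters (your ``seed $\psi$'' packaging is the same coset-by-coset change of variables the paper carries out explicitly), and then integrate term by term over the strip $\Gamma_\infty\setminus\mathcal{H}$. Your remark that the $m=0$ term must be tracked separately when $g$ is not cuspidal is in fact more careful than the paper's own proof, which leaves the range of the $m$-sum implicit and silently starts it at $m=1$.
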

\begin{rmk}\label{rmkcase23}
We have the similar results for the map in (\ref{case2}) with
$$\gamma=k+l+2\nu-\frac{1}{2},\;{\rm{and}}\;\beta (k,l,\nu;n)=\frac{\Gamma (k+l+2\nu-\frac{1}{2})~n^{k-1}}{\Gamma (k-1)~(4\pi)^{l+2\nu+\frac{1}{2}}},$$ 
and for the map in (\ref{case3}) with
$$\gamma=k+l+2\nu-\frac{1}{2},\;{\rm{and}}\;\beta (k,l,\nu;n)=\frac{\Gamma (k+l+2\nu-\frac{1}{2})~n^{k-\frac{1}{2}}}{\Gamma (k-\frac{1}{2})~(4\pi)^{l+2\nu}},$$
with the assumption that either (a) $g$ is a cusp form and $k>3$ or (b) $g$ is not cusp form and $l<k-2.$ 
\end{rmk}
\begin{rmk}
 Using \lemref{convergence} and \lemref{convergence half} one can show that the  series appearing in \eqref{eq2} converges.
\end{rmk}
\section{Proof of \thmref{main2}}
We need the following lemma to proof the main theorem.
\begin{lem}\label{mainlemma}
Using the same notation in \thmref{main2}, we have 
\begin{equation*}
  \sum_{\gamma\in\Gamma_{\infty}\setminus \Gamma} 
 \int_{\Gamma\setminus\mathcal{H}}  
\mid f(z)~\overline{[e^{2\pi inz}\mid_k\gamma, g ]_\nu}~(Im(z))^{k+l+2\nu+1}\mid ~d^*z
\end{equation*}
converges. 
\end{lem}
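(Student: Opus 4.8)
The plan is to reduce the double sum--integral to a single integral over the strip $\Gamma_\infty\setminus\mathcal{H}$ by an unfolding argument, and then to estimate the resulting integrand. Since the integrand is nonnegative, Tonelli's theorem permits interchanging summation and integration freely, so it suffices to identify the unfolded integral and to show it is finite. Write $w:=k+l+2\nu+1$ for the weight of $f$, and put $z=x+iy$. Using modularity of $g$ we have $g=\overline{\chi_2(\gamma)}\,g\,\tilde{\mid}_{l+\frac12}\gamma$ for $\gamma\in\Gamma$, so by the transformation rule \eqref{lm} for the Rankin--Cohen bracket,
\begin{equation*}
\bigl[e^{2\pi i n z}\,\tilde{\mid}_{k+\frac12}\gamma,\ g\bigr]_\nu=\overline{\chi_2(\gamma)}\,\bigl[e^{2\pi i n z},g\bigr]_\nu\big|_{w}\gamma .
\end{equation*}
Since $|\chi_2|=1$ and the (half-)integral automorphy factor has absolute value $|cz+d|^{w}$, taking absolute values annihilates the character, and a direct check shows the $\gamma$-th summand equals $\Psi(\gamma z)$, where
\begin{equation*}
\Psi(z):=|f(z)|\;\bigl|[e^{2\pi i n z},g]_\nu(z)\bigr|\;y^{w}.
\end{equation*}
Here one uses $|f(\gamma z)|=|cz+d|^{w}|f(z)|$ and $\mathrm{Im}(\gamma z)=y/|cz+d|^{2}$, exactly as in the integral-weight case. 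The standard unfolding identity then collapses the expression to $\int_{\Gamma_\infty\setminus\mathcal{H}}\Psi(z)\,d^{*}z$, and it remains only to prove this single integral converges.

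For the second step I would realize $\Gamma_\infty\setminus\mathcal{H}$ as the strip $\{z:0\le x\le 1,\ y>0\}$. Because the Fourier expansions of $f$ and of the derivatives $D^{j}g$ furnish bounds independent of $x$, the $x$-integration is harmless and the problem reduces to the convergence of a one-dimensional integral $\int_{0}^{\infty}(\cdots)\,dy/y^{2}$, which I would split at $y=1$. On $y\ge 1$ the cusp form $f$ decays exponentially, $|e^{2\pi i n z}|=e^{-2\pi n y}$ decays exponentially, and $g$ together with its derivatives is bounded as $y\to\infty$ (tending to its constant term); hence the integrand decays exponentially and this tail converges with no restriction on the weights.

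The heart of the matter is the region $0<y<1$, i.e.\ the cusps lying on the real axis, where the factor $e^{-2\pi n y}\to 1$ is of no help and only the polynomial growth near the boundary can be exploited. Here I would insert the Hecke bound $|f(z)|\ll y^{-w/2}$ for the cusp form and, via \lemref{convergence} and \lemref{convergence half}, the coefficient estimates $|b(m)|\ll m^{l-\frac12+\varepsilon}$ (when $g$ is not a cusp form), respectively $|b(m)|\ll m^{\frac{l}{2}+\varepsilon}$ (when $g$ is a cusp form), to bound
\begin{equation*}
\bigl|[e^{2\pi i n z},g]_\nu(z)\bigr|\le e^{-2\pi n y}\sum_{r=0}^{\nu}\bigl|C_r(k+\tfrac12,l+\tfrac12;\nu)\bigr|\,n^{r}\sum_{m\ge 0}m^{\nu-r}|b(m)|\,e^{-2\pi m y}
\end{equation*}
by an explicit negative power of $y$. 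Substituting these into $\Psi$ renders the integrand $\ll y^{A}$ for an explicit exponent $A=A(k,l,\nu)$ governed by the dominant term $r=0$; the invariant measure contributes a further $y^{-2}$, so convergence as $y\to 0$ amounts to the single inequality $A>1$, which reduces to a linear inequality between $k$ and $l$. The hypotheses of \thmref{main2} --- $k>2$ in the cuspidal case (a) and $l<k-\tfrac32$ in the non-cuspidal case (b) --- are exactly the conditions under which this holds. I expect the main obstacle to be precisely this boundary bookkeeping: since the exponential factor is useless as $y\to 0$, the admissible range of weights is dictated entirely by how sharply one plays the optimal Hecke decay of $f$ against the optimal boundary growth of $g$ and its $\nu$-th derivative, and any lossy estimate would erode the stated range.
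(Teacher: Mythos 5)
Your skeleton is the right one, and it is essentially the only proof the paper offers: the paper's ``proof'' of \lemref{mainlemma} is a one-line citation of Lemma 1 of \cite{herrero}, whose argument is precisely your unfolding-plus-strip-estimate. Your unfolding step is correct (Tonelli for nonnegative integrands; the character and the theta-multiplier are unimodular; $|f(\gamma z)|=|cz+d|^{w}|f(z)|$ with $w=k+l+2\nu+1$), and so is the region $y\geq 1$. The genuine gap is your final claim that the small-$y$ condition works out to ``exactly'' the hypotheses of \thmref{main2}. Do the arithmetic you postponed. In the non-cuspidal case, \lemref{convergence half} gives $|b(m)|\ll m^{l-\frac12+\epsilon}$, so the dominant $r=0$ term of your bound is $\sum_m m^{\nu}|b(m)|e^{-2\pi my}\asymp y^{-(\nu+l+\frac12+\epsilon)}$, whence $\Psi(z)y^{-2}\ll y^{w/2-\nu-l-\frac52-\epsilon}=y^{\frac{k}{2}-\frac{l}{2}-2-\epsilon}$; convergence at $y=0$ forces $k>l+2+2\epsilon$, i.e.\ $l\leq k-3$ for integers $k,l$. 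In the cuspidal case, $|b(m)|\ll m^{\frac{l}{2}+\epsilon}$ similarly gives the exponent $\frac{k}{2}-\frac52-\epsilon$ and forces $k\geq 4$. So your route proves the lemma only for $l\leq k-3$, resp.\ $k\geq 4$, while hypothesis (b) $l<k-\frac32$ allows $l=k-2$ and hypothesis (a) $k>2$ allows $k=3$: the borderline cases admitted by \thmref{main2} are exactly the ones your estimate misses.

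The loss is identifiable: bounding the bracket by the termwise absolute values of its Fourier series overstates its true size in $y$ by a factor $y^{-1/2}$. To recover the full range you must not discard the $x$-integration entirely: on the strip, keep the Hecke bound for $f$ but estimate $\int_0^1\bigl|[e^{2\pi inz},g]_\nu\bigr|\,dx$ by Cauchy--Schwarz and Parseval, namely $\bigl(\sum_{m}|\alpha(k,l,\nu,n,m)|^2\,|b(m)|^2e^{-4\pi(n+m)y}\bigr)^{1/2}\ll y^{-(\nu+l+\epsilon)}$ in the non-cuspidal case, resp.\ $\ll y^{-(\nu+\frac{l}{2}+\frac12+\epsilon)}$ in the cuspidal case; this gains the missing $y^{1/2}$ and yields convergence under $l<k-1$, resp.\ $k>2$, which covers the theorem's hypotheses. (In the cuspidal case one may instead use the pointwise bound $|D^{j}g(z)|\ll y^{-j-\frac{l}{2}-\frac14}$, obtained from $|g(z)|\ll y^{-\frac{l}{2}-\frac14}$ and Cauchy's integral formula on a disc of radius $y/2$.) It is worth noting that Herrero's own integral-weight hypothesis $l<k-3$ sits safely inside what the lossy termwise estimate can prove, whereas this paper's $l<k-\frac32$ does not; your closing worry that ``any lossy estimate would erode the stated range'' is precisely what happens to the estimate you propose.
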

\begin{proof}
The proof is similar to Lemma 1 in \cite{herrero}.
\end{proof}
Now we give a proof of \thmref{main2}. Put 
$$
T_{g, \nu}^*(f)(z)=  \sum_{n=1}^\infty c(n) q^{n}. 
$$
Now, we consider the $n$-th Poinca{\'r}e series of weight $k+\frac{1}{2}$ as given in \eqref{poincare half}. Then using the \lemref{poincare-lemma half}, we have 
$$
\langle T_{g,\nu}^*f,  P_{k+\frac{1}{2},n}\rangle= \tilde{\alpha}_{k,n}c(n),
$$
where 
$$
\tilde{\alpha}_{k,n} = \frac{\Gamma(k-\frac{1}{2})}{(4 \pi n)^{k-\frac{1}{2}}}.
$$
On the other hand, by definition of the adjoint map we have
$$
\langle T_{g,\nu}^*f,  P_{k+\frac{1}{2},n}\rangle= \langle f,   T_{g,\nu}(P_{k+\frac{1}{2},n})\rangle= \langle f,   [P_{k+\frac{1}{2},n}, g]_\nu  \rangle.
$$
Hence we get 
\begin{equation}\label{fc}
c(n)= \frac{(4 \pi n)^{k-\frac{1}{2}}}{\Gamma(k-\frac{1}{2})}\langle  f,[P_{k+\frac{1}{2},n}, g]_\nu\rangle.
\end{equation}
By definition, 
\begin{eqnarray*}
\displaystyle \langle  f,[P_{k+\frac{1}{2},n}, g]_\nu\rangle &=& \int_{\Gamma\setminus\mathcal{H}}f(z)~\overline{\left[P_{k+\frac{1}{2},n}, g \right]_\nu (z)}~(Im(z))^{k+l+2\nu+1}~d^*z\\
&=&  \int_{\Gamma\setminus\mathcal{H}}f(z)~\overline{[\sum_{\gamma\in\Gamma_\infty\setminus \Gamma}e^{2\pi inz} \tilde{\mid}_{k+\frac{1}{2}}\gamma, g ]_\nu (z)}~(Im(z))^{k+l+2\nu+~1}~d^*z\\
&=& \int_{\Gamma\setminus\mathcal{H}}\sum_{\gamma\in\Gamma_\infty\setminus \Gamma}f(z)~\overline{[e^{2\pi inz}\tilde{\mid}_{k+\frac{1}{2}}\gamma, g ]_\nu (z)}~(Im(z))^{k+l+2\nu+1}~d^*z.
\end{eqnarray*}
By \lemref{mainlemma}, we can interchange the sum and integration in $\langle  f,[P_{k,n}, g]_\nu\rangle$. Hence we get, 
\begin{equation*}
 \langle  f,[P_{k+\frac{1}{2},n}, g]_\nu\rangle\; =\; \sum_{\gamma\in\Gamma_\infty\setminus \Gamma} \int_{\Gamma\setminus\mathcal{H}}f(z)~\overline{[e^{2\pi inz}\tilde{\mid}_{k+\frac{1}{2}}\gamma, g ]_\nu (z)}~(Im(z))^{k+l+2\nu+1}~d^*z.
 \end{equation*}
 Since $g\in M_{l+\frac{1}{2}}(\Gamma, \chi_2),~~g\tilde{\mid}_{l+\frac{1}{2}}\gamma~=~\chi_2(d)g(z),$ for every $\gamma = \medmatrix a b c d \in \Gamma.$
 Therefore
 \begin{eqnarray*}
 \langle  f,[P_{k+\frac{1}{2},n}, g]_\nu\rangle\;&=&\; \!\!\!\!\!\!\!\!\!\!\!\!\!\!\!\!\sum_{\gamma=\medmatrix a b c d\in\Gamma_\infty\setminus \Gamma} \!\!\!\!\!\!\!\!\!\!\!\!\!\!\int_{\Gamma\setminus\mathcal{H}}f(z)~\overline{[e^{2\pi inz}\tilde{\mid}_{k+\frac{1}{2}}\gamma, \frac{1}{\chi_2(d)}~g\tilde{\mid}_{l+\frac{1}{2}}\gamma ]_\nu (z)}~(Im(z))^{k+l+2\nu+1}~d^*z\\
 &=& \!\!\!\!\!\!\!\!\!\!\!\!\!\!\!\!\sum_{\gamma =\medmatrix a b c d\in\Gamma_\infty\setminus \Gamma}\!\!\!\!\!\!\!\!\!\!\!\!\!\!\!\!\overline{\left(\frac{(\frac{-4}{d})^{k+l+1}}{\chi_2(d)}\right)} \int_{\Gamma\setminus\mathcal{H}}\!\!\!\!\!\!\!\!f(z)~\overline{[e^{2\pi inz}\mid_{k+\frac{1}{2}}\gamma, g\mid_{l+\frac{1}{2}}\gamma ]_\nu (z)} (Im(z))^{k+l+2\nu+1}~d^*z. 
\end{eqnarray*}
Using the change of variable $z$ to $\gamma^{-1}z$ in each integral, $\langle  f,[P_{k+\frac{1}{2},n}, g]_\nu\rangle$ equals
\begin{eqnarray*}
\!\!\sum_{\gamma =\medmatrix a b c d\in\Gamma_\infty\setminus \Gamma}\!\!\!\!\!\!\!\!\!\!\!\!\!\!\!\overline{\left(\frac{(\frac{-4}{d})^{k+l+1}}{\chi_2(d)}\right)} \!\!\!\int_{\Gamma\setminus\mathcal{H}}\!\!\!\!\!\!\!f(\gamma^{-1}z)~\overline{[e^{2\pi inz}\mid_{k+\frac{1}{2}}\gamma, g\mid_{l+\frac{1}{2}}\gamma ]_\nu (\gamma^{-1}z)}~(Im(\gamma^{-1}z))^{k+l+2\nu+1}~d^*(\gamma^{-1}z). 
\end{eqnarray*}
Since $f\in S_{k+l+2\nu+1}(\Gamma, \chi_2\chi),f(\gamma^{-1}z)=\chi_2(d)\chi(d)(cz+d)^{k+l+2\nu+1}f(z),$ for every $\gamma = \medmatrix a b c d \in \Gamma,$ and hence
\begin{eqnarray*}
 \langle  f,[P_{k+\frac{1}{2},n}, g]_\nu\rangle\;=\; \!\!\!\!\!\!\!\!\sum_{\gamma =\medmatrix a b c d\in\Gamma_\infty\setminus \Gamma}\overline{\left(\frac{(\frac{-4}{d})^{k+l+1}}{\chi_2(d)}\right)} \int_{\Gamma\setminus\mathcal{H}}\chi_2(a)\chi(a)(-cz+a)^{k+l+2\nu+1}f(z)\\
 \times~ \overline{(-cz+a)^{k+l+2\nu+1}([e^{2\pi inz}\mid_{k+\frac{1}{2}}\gamma, g\mid_{l+\frac{1}{2}}\gamma ]_\nu\mid_{k+l+2\nu+1}\gamma^{-1}) (z)} ~\left(\frac{Im(z)}{|-cz+a|^2}\right)^{k+l+2\nu+1}~d^*z. 
\end{eqnarray*}
Now using \eqref{lm}, we get
\begin{eqnarray*}
 \langle  f,[P_{k+\frac{1}{2},n}, g]_\nu\rangle\; &=&\; \!\!\!\!\!\!\!\!\!\!\!\!\!\!\!\!\!\!\!\!\sum_{\gamma=\medmatrix a b c d \in\Gamma_\infty\setminus \Gamma}\!\!\!\!\!\!\!\!\!\!\overline{\left(\frac{(\frac{-4}{d})^{k+l+1}}{\chi_2(d)}\right)} \chi_2(a)\chi(a) \int_{\gamma\Gamma\setminus\mathcal{H}}f(z)~\overline{[e^{2\pi inz}, g ]_\nu}~(Im(z))^{k+l+2\nu+1}~d^*z\\
&=& \!\!\!\!\!\!\!\!\!\!\!\!\!\!\!\!\!\!\!\!\sum_{\gamma=\medmatrix a b c d \in\Gamma_\infty\setminus \Gamma}\!\!\!\!\!\!\!\!\!\!\!\!\!\overline{\left(\frac{(\frac{-4}{d})^{k+l+1}}{\chi_2(d)}\right)} \chi_2(a)\left(\frac{-4}{a}\right)^{k+l+1} \!\!\!\!\!\!\int_{\gamma\Gamma\setminus\mathcal{H}}\!\!\!\!\!\!\!\!\!\!f(z)~\overline{[e^{2\pi inz}, g ]_\nu}~(Im(z))^{k+l+2\nu+1}~d^*z.
\end{eqnarray*}
The quantity appearing before integral is equals to 1, for all $\medmatrix a b c d \in\Gamma_\infty\setminus \Gamma,$ hence we get\\
$$\langle  f,[P_{k+\frac{1}{2},n}, g]_\nu\rangle\; =\;\sum_{\gamma\in\Gamma_\infty\setminus \Gamma}\int_{\gamma\Gamma\setminus\mathcal{H}}f(z)~\overline{[e^{2\pi inz}, g ]_\nu}~(Im(z))^{k+l+2\nu+1}~d^*z. $$
Now using Rankin unfolding argument, we have
\begin{eqnarray}\label{integration}
 \langle  f,[P_{k+\frac{1}{2},n}, g]_\nu\rangle\; &=&\;\int_{\Gamma_\infty \setminus\mathcal{H}}f(z)~\overline{[e^{2\pi inz}, g ]_\nu}~(Im(z))^{k+l+2\nu+1}~d^*z\\ \nonumber
&=&\int_{\Gamma_\infty \setminus\mathcal{H}} f(z)\sum_{r=0}^\nu C_r(k,l; \nu)~\overline{D^r(e^{2\pi inz}) D^{\nu-r}(g)}~(Im(z))^{k+l+2\nu+1}~d^*z\\ \nonumber
\end{eqnarray}
Now replacing $f$ and $g$ by their Fourier series in \eqref{integration}, $\langle  f,[P_{k+\frac{1}{2},n}, g]_\nu\rangle$ equals 
\begin{eqnarray*}
 \sum_{r=0}^\nu C_r(k,l; \nu)\int_{\Gamma_\infty \setminus\mathcal{H}} \left(\sum_s a(s)e^{2\pi is z}\right)n^r~\overline{e^{2\pi inz}}~m^{\nu-r}~\overline{b(m)}~\overline{e^{2\pi imz}}~(Im(z))^{k+l+2\nu+1}~d^*z
\end{eqnarray*}
\begin{eqnarray*} 
  &=& \int_{\Gamma_\infty \setminus\mathcal{H}} \sum_s\sum_m \alpha (k,l,\nu,n,m)a(s)\overline{b(m)}~e^{2\pi is z}~\overline{e^{2\pi inz}}~\overline{e^{2\pi imz}}~ (Im(z))^{k+l+2\nu+1}~d^*z\\
&=&  \sum_s\sum_m \alpha (k,l,\nu,n,m)a(s)\overline{b(m)}\int_{\Gamma_\infty \setminus\mathcal{H}}e^{2\pi is z}~\overline{e^{2\pi inz}}~\overline{e^{2\pi imz}}~(Im(z))^{k+l+2\nu+1}~d^*z.
\end{eqnarray*} 
 A fundamental domain for the action of $\Gamma_{\infty}$ on $\mathbb{H}$ is given by $[0,1]\times [0, \infty).$ 
 Integrating on this region after substituting $z=x+iy$,
 
 \begin{eqnarray*}
 \langle f,[P_{k,n}, g]_\nu \rangle 
 &=& \sum_s\sum_m \alpha (k,l,\nu,n,m)a(s)\overline{b(m)}\int_0^1\int_0^\infty  e^{2\pi i (s -n-m)x}e^{-2\pi (\alpha +n+m)y}y^{k+l+2\nu-1}dx dy\\
&=& \sum_m \alpha (k,l,\nu,n,m)a(n+m)\overline{b(m)}\int_0^\infty e^{-4\pi (n+m)y}y^{k+l+2\nu-1} dy\\
&=&  \frac{\Gamma(k+l+2\nu)} {(4\pi)^{k+l+2\nu}} \sum_m \frac{a(n+m)\overline{b(m)}\alpha (k,l,\nu,n,m)}{(n+m)^{k+l+2\nu}}.\\
\end{eqnarray*}

Now substituting the above value of  $\langle f,[P_{k+\frac{1}{2},n}, g]_\nu \rangle$  in \eqref{fc}, we get the required expression for $c(n)$ given in \thmref{main2}.\\
\section{Applications}
Consider the linear map $T_{g,\nu}^*\circ T_{g,\nu}$ on $S_k(\Gamma)$ with $g(z)\in M_l(\Gamma, \chi_2).$ If $\lambda$ is a eigenvalue of $T_{g,\nu}^*\circ T_{g,\nu},$ then $\lambda\geq 0.$ Suppose that $S_k(\Gamma)$ is one dimensional space generated by $f(z)=\sum_m a(n)q^n.$ Then $T_{g,\nu}^*\circ T_{g,\nu}(h)\;=\;\lambda f,\;\forall\;h\in S_k(\Gamma).$ In particular, $T_{g,\nu}^*\circ T_{g,\nu}(f)\;=\;\lambda f$ with $\lambda\geq 0$ and if we write $T_{g,\nu}^*\circ T_{g,\nu}(f)=\sum_nc(n)q^n$ then 
$$c(n)= \frac{\Gamma(k+l+2\nu-1)}{\Gamma(k-1)}\frac{n^{k-\frac{1}{2}}}{(4\pi)^{l+2\nu}}  \sum_{m=1}^\infty\frac{a_{T_{g,\nu}(f)}(n+m)\overline{b(m)}~~\alpha(k,l,\nu,n,m)}{(n+m)^{k+l+2\nu-1}},$$ 
where $a_{T_{g,\nu}(f)}(n)$ is the $n$-th Fourier coefficient of $T_{g,\nu}(f)=[f,g]_\nu.$ If $a(m_0)$ is the first non-zero Fourier coefficient of $f$ then by comparing the Fourier coefficients in  $T_{g,\nu}^*\circ T_{g,\nu}(f)\;=\;\lambda f,$ we have 
$$\lambda=\frac{\Gamma(k+l+2\nu-1)}{a(m_0)\Gamma(k-1)}\frac{m_0^{k-\frac{1}{2}}}{(4\pi)^{l+2\nu}}  \sum_{m=1}^\infty\frac{a_{T_{g,\nu}(f)}(m_0+m)\overline{b(m)}~~\alpha(k,l,\nu,m_0,m)}{(m_0+m)^{k+l+2\nu-1}} \geq 0.$$
In particular, if we take $l=0,\;k=6$ and $\nu=0$ with $g(z)=\theta (z)=\sum\limits_nq^{n^2}$ and the unique newform $\Delta_{4,6}(z)=\sum_n\tau_{4,6}(n)q^n\in S_6(\Gamma_0(4)),$ in case (\ref{case2}) then\\
$m_0=1,\;\alpha (k,l,\nu,m_0,m)=1,$ and 
$$\lambda =\frac{\Gamma (\frac{11}{2})}{\Gamma(5)2\sqrt{\pi}} \sum_{m=1}^\infty\frac{a_{T_{\theta,0}(\Delta_{4,6})}(m+1)\overline{b(m)}}{(m+1)^{\frac{11}{2}}} > 0,$$
or
\begin{equation}\label{eqrmk}
\sum_{m=1}^\infty\frac{a_{T_{\theta,0}(\Delta_{4,6})}(m+1)\overline{b(m)}}{(m+1)^{\frac{11}{2}}} > 0.
\end{equation}
Now $a_{T_{\theta,0}(\Delta_{4,6})}(m+1)$ is the $(m+1)$-th Fourier coefficient of $\theta (z) \Delta_{4,6}(z)$ and equals to $\sum\limits_{r=1}^{m+1}b(r)\tau_{4,6}(m+1-r).$ 
Putting the value of $a_{T_{\theta,0}(\Delta_{4,6})}(m+1)$ in \eqref{eqrmk}, we have 
\begin{equation*}
\sum_{m=1}^\infty\frac{\left(\sum\limits_{r=1}^{m+1}b(r)\tau_{4,6}(m+1-r)\right)\overline{b(m)}}{(m+1)^{\frac{11}{2}}} > 0,
\end{equation*}
or
\begin{eqnarray*}
\sum_{m=1}^\infty\frac{\left(\sum\limits_{r=1}^{m^2+1}\tau_{4,6}(m^2+1-r^2)\right)}{(m^2+1)^{\frac{11}{2}}} > 0.
\end{eqnarray*}

\begin{acknowledgements}
Authors would like to thank B. Ramakrishnan and B. Sahu for useful discussions. The  authors would also like to thank Council of Scientific and Industrial Research
{\bf(CSIR)}, India for  financial support.  
\end{acknowledgements}

\end{document}